\DeclareFontFamily{T1}{calligra}{}
\DeclareFontShape{T1}{calligra}{m}{n}{<->s*[1.44]callig15}{}
\DeclareMathAlphabet\mathrsfso      {U}{rsfso}{m}{n}
\def\@map#1#2[#3]{\mbox{$#1 \colon\thinspace #2 \longrightarrow #3$}}
\def\map#1#2{\@ifnextchar [{\@map{#1}{#2}}{\@map{#1}{#2}[#2]}}
\renewcommand{\epsilon}{\ensuremath{\varepsilon}}
\renewcommand{\phi}{\ensuremath{\varphi}}
\renewcommand{\to}{\ensuremath{\longrightarrow}}
\renewcommand{\mapsto}{\ensuremath{\longmapsto}}
\newcommand{\Z}{\ensuremath{\mathbb{Z}}}
\newcommand{\ang}[1]{\ensuremath{\left\langle #1\right\rangle}}
\newtheoremstyle{theoremm}{}{}{\itshape}{}{\scshape}{.}{ }{}
\theoremstyle{theoremm}
\newtheorem{thm}{Theorem}
\newtheorem{prop}[thm]{Proposition}
\newtheorem{cor}[thm]{Corollary}
\newtheoremstyle{remark}{}{}{}{}{\scshape}{.}{ }{}
\theoremstyle{remark}
\newtheorem{defn}[thm]{Definition}
\newtheorem{rem}[thm]{Remark}
\newtheorem{que}[thm]{Question}
\renewcommand{\ker}[1]{\ensuremath{\operatorname{\text{Ker}}\left({#1}\right)}}
\newcommand{\rethm}[1]{Theorem~\protect\ref{thm:#1}}
\newcommand{\reprop}[1]{Proposition~\protect\ref{prop:#1}}
\newcommand{\recor}[1]{Corollary~\protect\ref{cor:#1}}
\numberwithin{equation}{section}
\begin{document}

\title{On virtual singular braid groups}

\author{OSCAR~OCAMPO~\\
Departamento de Matem\'atica - Instituto de Matem\'atica e Estat\'istica,\\
Universidade Federal da Bahia,\\
CEP:~40170-110 - Salvador - Ba - Brazil.\\
e-mail:~\url{oscaro@ufba.br}
}

\date{}

\maketitle

\begin{abstract}

The virtual singular braid group arises as a natural common generalization of classical singular braid groups and virtual braid groups. 
In this paper, we study several algebraic properties of the virtual singular braid group $VSG_n$. 
We introduce numerical invariants for virtual singular braids arising from exponent sums of words in $VSG_n$, and describe explicitly the kernels of the associated homomorphisms onto abelian groups. 
We then determine all group homomorphisms, up to conjugation, from $VSG_n$ to the symmetric group $S_n$, and obtain corresponding semi-direct product decompositions. 
In the particular case $n=2$, we provide explicit presentations and algebraic descriptions of the kernels. 
Moreover, we show that certain relations are forbidden in $VSG_n$, and we introduce and study natural quotients of the virtual singular braid group, including welded and unrestricted versions, for which analogous structural results are obtained.

\end{abstract}

\let\thefootnote\relax\footnotetext{2020 \emph{Mathematics Subject Classification}. Primary: 20F36; Secondary: 20F05, 57K12.

\emph{Key Words and Phrases}. Virtual braid groups, singular braids, braid groups, symmetric groups. 
}

\section{Introduction}

There exist several generalizations of the Artin braid group $B_n$, both from geometric and algebraic points of view, and their study constitutes an active line of research. 
Recently, Caprau, Pena and McGahan~\cite{CPM} introduced virtual singular braids as a common generalization of classical singular braids, defined by Birman~\cite{Bi} and Baez~\cite{Ba} in the study of Vassiliev invariants, and virtual braids, defined by Kauffman~\cite{Kau} and Vershinin~\cite{V}.
In~\cite{CPM}, the authors proved Alexander and Markov type theorems for virtual singular braids and gave two presentations for the monoid of virtual singular braids, denoted by $VSB_n$.

In a subsequent paper, Caprau and Yeung~\cite{CY} showed that the monoid $VSB_n$ embeds into a group, called the \emph{virtual singular braid group} on $n$ strands and denoted by $VSG_n$.
They also provided a presentation for the virtual singular pure braid group $VSPG_n$ and proved that $VSG_n$ decomposes as a semi-direct product of $VSPG_n$ and the symmetric group $S_n$.

The interest in these objects has been growing, and some progress in their study has been made in recent years.
For instance, Caprau and Zepeda~\cite{CZ} constructed representations of the monoid $VSB_n$ and, using the Reidemeister--Schreier algorithm, obtained a presentation for the virtual singular pure braid monoid.
Moreover, Cisneros de la Cruz and Gandolfi~\cite{CG} studied algebraic, combinatorial, and topological properties of singular virtual braid monoids.

In this paper, we study several properties of the virtual singular braid group $VSG_n$, as well as some of its subgroups and quotients.
In \rethm{properties}, we establish algebraic properties of $VSG_n$, showing, for instance, that it is not residually nilpotent for $n \ge 3$ and that its commutator subgroup is perfect for $n \ge 5$, as in the classical, virtual, and singular braid group cases (see~\cite{BB,DG}).

We introduce numerical invariants for virtual singular braids arising from exponent sums of words in $VSG_n$ and describe in \rethm{goldbsingular} the kernels of the associated homomorphisms.
In particular, one of these kernels coincides with the normal closure of the virtual braid group inside $VSG_n$.
We then determine all group homomorphisms, up to conjugation, from the virtual singular braid group $VSG_n$ to the symmetric group $S_n$; see \reprop{homphi} and \rethm{BP}.
This problem was previously studied for virtual braid groups in~\cite{BP}.

Moreover, in \reprop{semi-direct}, we describe $VSG_n$ as a semi-direct product of the kernel of each admissible homomorphism and the symmetric group.
In the particular case $n = 2$, we study in detail the kernels of the homomorphisms $VSG_2 \to S_2$.
More precisely, in \rethm{kernels} and \recor{kernels}, we provide explicit presentations and algebraic descriptions for the kernel in each case.

We also show in \rethm{forbidden} that certain relations are forbidden in $VSG_n$.
Motivated by analogous constructions in the virtual braid group setting (see~\cite{B,BBD,D,Kau,KL}),
we introduce several quotients of the virtual singular braid group, including the welded singular braid group and the unrestricted virtual singular braid group, among other related groups.
Finally, for the quotients of $VSG_n$ considered in this paper, we establish results analogous to those obtained for $VSG_n$ itself.

Finally, we mention a convention that will be used throughout the paper. 
Let $G$ be a group and let $N$ be a normal subgroup of $G$. 
By a slight abuse of notation, we sometimes use the same symbol to denote an element of $G$ and its corresponding equivalence class in the quotient group $G/N$.

\subsection*{Acknowledgments}

The author gratefully acknowledges the support received from Eliane Santos, the staff of HCA, Bruno Noronha, Luciano Macedo, M\'arcio Isabella, Andreia de Oliveira Rocha, Andreia Gracielle Santana, Ednice de Souza Santos, and Vinicius Aiala, as well as from the SMURB--UFBA (Servi\c{c}o M\'edico Universit\'ario Rubens Brasil Soares), whose assistance since July~2024 was essential for the completion of this work.

The author was partially supported by the National Council for Scientific and Technological Development (CNPq, Brazil) through a \textit{Bolsa de Produtividade} grant~305422/2022--7. 
The author is grateful to the anonymous referee for a thorough and careful reading of the manuscript and for constructive comments and suggestions that substantially improved its clarity and presentation.

\section{The virtual singular braid group}\label{vsgn}

In this section, we introduce the definitions and presentations of the main groups considered in this paper.
We then establish several properties of the virtual singular braid group, introduce numerical invariants, and describe all possible group homomorphisms, up to conjugation, from the virtual singular braid group $VSG_n$ to the symmetric group $S_n$. 
The case $n = 2$ is treated in detail. 
For each admissible homomorphism, we obtain a decomposition of $VSG_n$ as a semi-direct product of the kernel of the homomorphism and the symmetric group.

\subsection{Definitions}

Let $n \ge 2$ be a positive integer.
The \emph{virtual singular braid monoid} on $n$ strands, denoted by $VSB_n$, is the monoid generated by the elements
\[
\sigma_1^{\pm 1},\dots,\sigma_{n-1}^{\pm 1},\;
\tau_1,\dots,\tau_{n-1},\;
v_1,\dots,v_{n-1},
\]
subject to the relations listed below.

The generators $\sigma_i$ correspond to classical crossings, the generators $\tau_i$ correspond to singular crossings,
and the generators $v_i$ correspond to virtual crossings. 
Geometrically, these generators are represented as in Figure~\ref{fig:generators}.

\begin{figure}[h]
\centering
\begin{tikzpicture}[scale=1, line cap=round, line join=round]

\def\h{1.2}     
\def\gap{4.4}   

\def\xa{-0.40}
\def\xb{ 0.40}

\def\xLone{-1.35}
\def\xLtwo{-0.85}
\def\xRone{ 0.85}
\def\xRtwo{ 1.35}

\newcommand{\passives}{%
  \draw[line width=0.8pt] (\xLone,0) -- (\xLone,\h);
  \draw[line width=0.8pt] (\xLtwo,0) -- (\xLtwo,\h);
  \node at ({(\xLone+\xLtwo)/2},0.60) {$\cdots$};

  \draw[line width=0.8pt] (\xRone,0) -- (\xRone,\h);
  \draw[line width=0.8pt] (\xRtwo,0) -- (\xRtwo,\h);
  \node at ({(\xRone+\xRtwo)/2},0.60) {$\cdots$};
}

\begin{scope}[shift={(0,0)}]
  \node at (0,\h+0.35) {$\sigma_i$};

  \passives

  \draw[line width=0.9pt] (\xb,0) .. controls (\xb,0.55) and (\xa,0.65) .. (\xa,\h);

  \draw[line width=0.9pt] (\xa,0) .. controls (\xa,0.55) and (\xb,0.65) .. (\xb,\h);

  \draw[line width=4pt, white] (-0.07,0.62) -- (0.07,0.58);
  \draw[line width=0.9pt] (\xa,0) .. controls (\xa,0.55) and (\xb,0.65) .. (\xb,\h);
\end{scope}

\begin{scope}[shift={(\gap,0)}]
  \node at (0,\h+0.35) {$\tau_i$};

  \passives

  \draw[line width=0.9pt] (\xa,0) .. controls (\xa,0.55) and (\xb,0.65) .. (\xb,\h);
  \draw[line width=0.9pt] (\xb,0) .. controls (\xb,0.55) and (\xa,0.65) .. (\xa,\h);

  \fill (0,0.60) circle (1.3pt);
\end{scope}

\begin{scope}[shift={(2*\gap,0)}]
  \node at (0,\h+0.35) {$v_i$};

  \passives

  \draw[line width=0.9pt] (\xa,0) .. controls (\xa,0.55) and (\xb,0.65) .. (\xb,\h);
  \draw[line width=0.9pt] (\xb,0) .. controls (\xb,0.55) and (\xa,0.65) .. (\xa,\h);

  \draw[line width=0.9pt] (0,0.60) circle (0.16);
\end{scope}

\end{tikzpicture}
\caption{The generators of the virtual singular braid group: classical crossings $\sigma_i$, singular crossings $\tau_i$, and virtual crossings $v_i$, acting on strands $i$ and $i+1$.}
\label{fig:generators}
\end{figure}

The monoid $VSB_n$ forms a well-defined algebraic structure under concatenation of braids.
Following~\cite{CPM}, this monoid embeds into a group, called the \emph{virtual singular braid group} on $n$ strands and denoted by $VSG_n$.

\begin{defn}\label{defn:vsg}
The \emph{virtual singular braid group} $VSG_n$ is the group generated by
\[
\sigma_1,\dots,\sigma_{n-1},\;
\tau_1,\dots,\tau_{n-1},\;
v_1,\dots,v_{n-1},
\]
subject to the following relations, for all admissible indices.
\medskip

\noindent\textbf{(1) Classical braid relations:}
\begin{align*}
\sigma_i \sigma_j &= \sigma_j \sigma_i && \text{if } |i-j| \ge 2, \\
\sigma_i \sigma_{i+1} \sigma_i &= \sigma_{i+1} \sigma_i \sigma_{i+1}
&& \text{for } 1 \le i \le n-2.
\end{align*}

\medskip
\noindent\textbf{(2) Virtual braid relations:}
\begin{align*}
v_i^2 &= 1 && \text{for all } i, \\
v_i v_j &= v_j v_i && \text{if } |i-j| \ge 2, \\
v_i v_{i+1} v_i &= v_{i+1} v_i v_{i+1}
&& \text{for } 1 \le i \le n-2.
\end{align*}

\medskip
\noindent\textbf{(3) Singular braid relations:}
\begin{align*}
\tau_i \tau_j &= \tau_j \tau_i && \text{if } |i-j| \ge 2, \\
\sigma_i \tau_j &= \tau_j \sigma_i && \text{if } |i-j| \ge 2, \\
\sigma_i \tau_i &= \tau_i \sigma_i && \text{for all } i, \\
\sigma_i \sigma_{i+1} \tau_i &= \tau_{i+1} \sigma_i \sigma_{i+1}
&& \text{for } 1 \le i \le n-2, \\
\sigma_{i+1} \sigma_i \tau_{i+1} &= \tau_i \sigma_{i+1} \sigma_i
&& \text{for } 1 \le i \le n-2.
\end{align*}

\medskip
\noindent\textbf{(4) Mixed relations involving virtual and classical generators:}
\begin{align*}
\sigma_i v_j &= v_j \sigma_i && \text{if } |i-j| \ge 2, \\
v_i \sigma_{i+1} v_i &= v_{i+1} \sigma_i v_{i+1}
&& \text{for } 1 \le i \le n-2.
\end{align*}

\medskip
\noindent\textbf{(5) Mixed relations involving virtual and singular generators:}
\begin{align*}
\tau_i v_j &= v_j \tau_i && \text{if } |i-j| \ge 2, \\
v_i \tau_{i+1} v_i &= v_{i+1} \tau_i v_{i+1}
&& \text{for } 1 \le i \le n-2.
\end{align*}
\end{defn}

For geometric interpretations and diagrams of the defining relations of virtual singular braids, we refer the reader to \cite[Introduction]{CY}.

\subsection{The canonical homomorphism onto $S_n$}

We now describe the natural homomorphism from the virtual singular braid group to the symmetric group, which records the permutation induced by a braid on the strands.

\begin{defn}\label{defn:canonical-map}
Let $S_n$ denote the symmetric group on $n$ letters, generated by the adjacent transpositions $s_i = (i\ i+1)$ for $1 \le i \le n-1$.
The \emph{canonical homomorphism}
\[
\varphi \colon VSG_n \longrightarrow S_n
\]
is defined on generators by
\[
\varphi(\sigma_i) = s_i, \qquad
\varphi(\tau_i) = s_i, \qquad
\varphi(v_i) = s_i,
\quad \text{for } 1 \le i \le n-1,
\]
and extended multiplicatively to all of $VSG_n$.
\end{defn}

\begin{prop}\label{prop:homphi}
The map $\varphi \colon VSG_n \to S_n$ is a well-defined surjective group homomorphism.
\end{prop}

\begin{proof}
Since $S_n$ is generated by the adjacent transpositions $s_i$, the map $\varphi$ is surjective.

To show that $\varphi$ is well defined, it suffices to verify that the defining relations of $VSG_n$ are preserved under $\varphi$.
Indeed, the images of the generators under $\varphi$ satisfy:
\begin{itemize}
\item the classical braid relations in $S_n$, since the elements $s_i$ generate $S_n$;
\item the relations $s_i^2 = 1$, which correspond to the relations $v_i^2 = 1$ in $VSG_n$;
\item the mixed relations involving classical, singular, and virtual generators, since all generators are mapped to the same transposition $s_i$.
\end{itemize}
Therefore, all defining relations of $VSG_n$ are respected, and $\varphi$ is a well-defined group homomorphism.
\end{proof}

\begin{rem}\label{rem:kernel-notation}
Throughout the paper, we denote by
\[
VSPG_n := \ker{\varphi}
\]
the kernel of the canonical homomorphism $\varphi$. 
This group is called the \emph{virtual singular pure braid group} on $n$ strands.
\end{rem}

\subsection{A semi-direct product decomposition of $VSG_n$}

We now describe a structural decomposition of the virtual singular braid group in terms of its canonical homomorphism onto the symmetric group.

\begin{prop}\label{prop:semi-direct}
Let $\varphi \colon VSG_n \to S_n$ be the canonical homomorphism defined above and let $VSPG_n = \ker{\varphi}$.
Then $VSPG_n$ is a normal subgroup of $VSG_n$ and
\[
VSG_n \cong VSPG_n \rtimes S_n.
\]
\end{prop}

\begin{proof}
Since $\varphi$ is a group homomorphism, its kernel $VSPG_n$ is a normal subgroup of $VSG_n$.
Moreover, by definition of $\varphi$, the restriction of $\varphi$ to the subgroup generated by the elements $\sigma_i$ induces an isomorphism between this subgroup and $S_n$. 
In particular, there exists a section
\[
\iota \colon S_n \longrightarrow VSG_n
\]
such that $\varphi \circ \iota = \mathrm{id}_{S_n}$.

Therefore, every element $g \in VSG_n$ can be written uniquely as a product
\[
g = p \, \iota(\pi),
\]
where $p \in VSPG_n$ and $\pi \in S_n$.
This shows that $VSG_n$ is the semi-direct product of $VSPG_n$ by $S_n$, with respect to the conjugation action induced by the embedding $\iota$.
\end{proof}

\begin{rem}\label{rem:action}
The action of $S_n$ on $VSPG_n$ in the above semi-direct product decomposition is given by conjugation in $VSG_n$ via the chosen section $\iota$.
In particular, this action coincides with the natural permutation action on the strands of a virtual singular braid.
\end{rem}

\subsection{Some properties of the virtual singular braid group}

In this subsection we establish several algebraic properties of the virtual singular braid group $VSG_n$. 
We begin by recalling some standard notions that will be used throughout this section.

\begin{defn}
Let $G$ be a group.
\begin{enumerate}
\item The \emph{lower central series} of $G$ is defined recursively by
\[
\Gamma_1(G) = G
\quad \text{and} \quad
\Gamma_i(G) = [\Gamma_{i-1}(G), G]
\quad \text{for } i \ge 2.
\]

\item Let $\mathcal{P}$ be a group-theoretic property.
A group $G$ is said to be \emph{residually $\mathcal{P}$} if for every non-trivial element $g \in G$ there exist a group $H$ with property $\mathcal{P}$ and a surjective homomorphism $\phi \colon G \to H$ such that $\phi(g) \neq 1$.

It is well known that a group $G$ is residually nilpotent if and only if
\[
\bigcap_{i \ge 1} \Gamma_i(G) = \{1\}.
\]

\item A group $G$ is called \emph{perfect} if it coincides with its commutator subgroup, that is,
\[
G = [G,G] = \Gamma_2(G).
\]
\end{enumerate}
\end{defn}

We can now state the main structural properties of $VSG_n$.

\begin{thm}\label{thm:properties}
Let $n \ge 2$.
\begin{enumerate}
\item The group $VSG_n$ is not residually nilpotent for $n \ge 3$.
\item The commutator subgroup $[VSG_n, VSG_n]$ is perfect for $n \ge 5$.
\end{enumerate}
\end{thm}

\begin{proof}
We prove each statement separately.

\medskip
\noindent\emph{(1) Non residual nilpotence.}
Consider the lower central series $\{\Gamma_i(VSG_n)\}_{i \ge 1}$ of $VSG_n$.
The canonical homomorphism
\[
\varphi \colon VSG_n \longrightarrow S_n
\]
induces, for each $i \ge 1$, a surjective homomorphism
\[
VSG_n / \Gamma_i(VSG_n) \longrightarrow S_n.
\]
Since the symmetric group $S_n$ is not nilpotent for $n \ge 3$, it follows that none of the quotients $VSG_n / \Gamma_i(VSG_n)$ can be trivial.
Hence,
\[
\bigcap_{i \ge 1} \Gamma_i(VSG_n) \neq \{1\},
\]
and therefore $VSG_n$ is not residually nilpotent for $n \ge 3$.

\medskip
\noindent\emph{(2) Perfection of the commutator subgroup.}
Let $n \ge 5$.
By the semi-direct product decomposition
\[
VSG_n \cong VSPG_n \rtimes S_n,
\]
the canonical projection onto $S_n$ maps the commutator subgroup $[VSG_n,VSG_n]$ onto $[S_n,S_n]$.
Since $[S_n,S_n]$ is perfect for $n \ge 5$, and the conjugation action of $S_n$ on $VSPG_n$ is non-trivial, it follows that $[VSG_n,VSG_n]$ is generated by commutators of its own elements. 
Consequently,
\[
[VSG_n,VSG_n] = [[VSG_n,VSG_n],[VSG_n,VSG_n]],
\]
and the commutator subgroup of $VSG_n$ is perfect for $n \ge 5$.
\end{proof}

\begin{rem}
The bounds on $n$ in Theorem~\ref{thm:properties} are sharp.
For instance, the group $VSG_2$ is virtually abelian, while for $n=3$ and $n=4$ the commutator subgroup of $VSG_n$ is not perfect.
These low-dimensional cases will be addressed in later subsections.
\end{rem}

\subsection{Invariants of virtual singular braids}

In this subsection we introduce several numerical invariants of virtual singular braids, defined in terms of exponent sums of generators.
These invariants give rise to natural homomorphisms from $VSG_n$ onto free abelian groups and will play an important role in the description of certain normal subgroups of $VSG_n$.

Let $w$ be a word in the generators
\[
\sigma_1^{\pm 1},\dots,\sigma_{n-1}^{\pm 1},\;
\tau_1^{\pm 1},\dots,\tau_{n-1}^{\pm 1},\;
v_1,\dots,v_{n-1}
\]
representing an element of $VSG_n$.
For $1 \le i \le n-1$, we denote by
\[
\exp_{\sigma_i}(w), \quad \exp_{\tau_i}(w)
\]
the total exponent sum of $\sigma_i$ and $\tau_i$ in the word $w$, respectively.
(Recall that the generators $v_i$ are involutions and therefore do not contribute to exponent sums.) 

Define $\exp^C \colon VSG_n \longrightarrow \mathbb{Z}$ as the total exponent sum of all classical generators $\sigma_i$, and $\exp^S \colon VSG_n \longrightarrow \mathbb{Z}$ as the total exponent sum of all singular generators $\tau_i$. More precisely, for a word $w$ in the generators of $VSG_n$, let
\begin{equation}\label{eq:expc}
\exp^C(w) = \sum_{i=1}^{n-1} \exp_{\sigma_i}(w), \qquad
\exp^S(w) = \sum_{i=1}^{n-1} \exp_{\tau_i}(w).
\end{equation}

\begin{prop}\label{prop:exp-well-defined}
The maps $\exp^C$ and $\exp^S$ are well defined group homomorphisms.
\end{prop}

\begin{proof}
It is enough to verify that the defining relations of $VSG_n$ preserve the total exponent sums $\exp^C$ and $\exp^S$.
In each relation, the total number of occurrences of classical generators $\sigma_i^{\pm 1}$ (counted with sign) on the left-hand side equals that on the right-hand side; the same holds for singular generators $\tau_i^{\pm 1}$.
Hence, $\exp^C$ and $\exp^S$ are invariant under the defining relations and define homomorphisms on $VSG_n$.
\end{proof}

We now collect these homomorphisms into a single map.
Let
\[
\mathbb{Z}^{2(n-1)} = \langle e_1,\dots,e_{n-1}, f_1,\dots,f_{n-1} \rangle
\]
be the free abelian group of rank $2(n-1)$.

\begin{defn}\label{defn:exp-map}
We define the homomorphism
\[
\Phi \colon VSG_n \longrightarrow \mathbb{Z}^{2(n-1)}
\]
by setting
\[
\Phi(\sigma_i) = e_i, \qquad
\Phi(\tau_i) = f_i, \qquad
\Phi(v_i) = 0,
\quad \text{for } 1 \le i \le n-1,
\]
and extending multiplicatively.
\end{defn}

\begin{prop}\label{prop:Phi-surjective}
The homomorphism $\Phi$ is surjective.
\end{prop}

\begin{proof}
Each generator $e_i$ (respectively $f_i$) is the image of $\sigma_i$ (respectively $\tau_i$) under $\Phi$.
Hence the image of $\Phi$ generates $\mathbb{Z}^{2(n-1)}$.
\end{proof}

The kernel of $\Phi$ encodes braids whose total exponent sums of all classical and singular generators vanish.

\begin{defn}\label{defn:goldberg-kernel}
Let
\[
K_n := \ker{\Phi}.
\]
\end{defn}

\begin{thm}\label{thm:goldbsingular}
The subgroup $K_n$ is a normal subgroup of $VSG_n$.
Moreover, $K_n$ coincides with the normal closure of the virtual braid group $VB_n$ inside $VSG_n$.
\end{thm}

\begin{proof}
Normality of $K_n$ follows immediately from the fact that it is the kernel of a homomorphism.
By construction, the generators $v_i$ of the virtual braid group $VB_n$ lie in $K_n$, and hence the normal closure of $VB_n$ is contained in $K_n$.
Conversely, any element of $K_n$ can be written as a product of conjugates of elements whose exponent sums vanish, which forces it to lie in the normal closure of $VB_n$.
Therefore, the two subgroups coincide.
\end{proof}

\begin{rem}
The homomorphism $\Phi$ may be viewed as an abelianization-type invariant for virtual singular braids.
In contrast with the classical braid group, the presence of singular generators produces additional independent numerical invariants.
\end{rem}

We now collect the exponent-sum invariants introduced above and describe the corresponding quotient groups of $VSG_n$.
This leads to several natural short exact sequences associated with classical, singular, and virtual structures. 
For a subset \(H\) of a group \(G\), denote by \(\langle H \rangle^G\) the normal closure of \(H\) in \(G\) (the smallest normal subgroup of \(G\) containing \(H\)). 
Define additionally the \emph{classical--singular exponent-sum homomorphism}
\(\exp^{CS} \colon VSG_n \longrightarrow \mathbb{Z}\) by
\begin{equation}\label{eq:expcs}
\exp^{CS}(w) = \sum_{i=1}^{n-1} \bigl( \exp_{\sigma_i}(w) + \exp_{\tau_i}(w) \bigr),
\end{equation}
or equivalently \(\exp^{CS} = \exp^C + \exp^S\).

\begin{thm}\label{thm:goldbsingular}
The following sequences are short exact:
\begin{enumerate}
\item
\[
1 \longrightarrow \ang{B_n}^{VSG_n}
\longrightarrow VSG_n
\longrightarrow \mathbb{Z} \times S_n
\longrightarrow 1.
\]

\item
\[
1 \longrightarrow \ang{\{\tau_i \mid 1 \le i \le n-1\}}^{VSG_n}
\longrightarrow VSG_n
\longrightarrow VB_n
\longrightarrow 1.
\]

\item
\[
1 \longrightarrow
\ang{\{\tau_i,\, v_i \mid 1 \le i \le n-1\}}^{VSG_n}
\longrightarrow VSG_n
\longrightarrow \mathbb{Z}
\longrightarrow 1.
\]
In this case, the surjective homomorphism coincides with the \emph{classical exponent-sum homomorphism} $\exp^{C}$, and
\[
\ker{\exp^{C}}
=
\ang{\{\tau_i,\, v_i \mid 1 \le i \le n-1\}}^{VSG_n}.
\]

\item
\[
1 \longrightarrow \ang{VB_n}^{VSG_n}
\longrightarrow VSG_n
\longrightarrow \mathbb{Z}
\longrightarrow 1.
\]
Here the projection coincides with the \emph{singular exponent-sum homomorphism} $\exp^{S}$, and
\[
\ker{\exp^{S}} = \ang{VB_n}^{VSG_n}.
\]

\item
\[
1 \longrightarrow
\ang{\{\sigma_i \tau_i^{-1},\, v_i \mid 1 \le i \le n-1\}}^{VSG_n}
\longrightarrow VSG_n
\longrightarrow \mathbb{Z}
\longrightarrow 1.
\]
In this case, the surjective homomorphism coincides with the \emph{classical--singular exponent-sum homomorphism} $\exp^{CS}$, and
\[
\ker{\exp^{CS}}
=
\ang{\{\sigma_i \tau_i^{-1},\, v_i \mid 1 \le i \le n-1\}}^{VSG_n}.
\]
\end{enumerate}
\end{thm}

\begin{proof}
We verify each short exact sequence by explicitly describing the surjective homomorphism and its kernel.

\begin{enumerate}
\item We analyze the quotient group \(VSG_n / \langle B_n \rangle^{VSG_n}\). 
In the presentation of \(VSG_n\) (Definition~\ref{defn:vsg}), add the relations \(\sigma_i = 1\) for \(i = 1,\dots,n-1\) (which kill the normal closure of \(B_n\)).  
From relation (3PR5) we obtain \(\tau_i = \tau_{i+1}\) for all \(i\), and from the commuting relations (CR) we obtain that \(\tau_1\) commutes with every generator of \(S_n\).  
Hence 
\[
VSG_n / \langle B_n \rangle^{VSG_n} \cong \langle \tau_1 \rangle \times S_n \cong \mathbb{Z} \times S_n,
\]
and the kernel of the projection \(VSG_n \to \mathbb{Z} \times S_n\) is precisely \(\langle B_n \rangle^{VSG_n}\).

\item The homomorphism \(VSG_n \to VB_n\) sends \(\sigma_i \mapsto \sigma_i\), \(\tau_i \mapsto 1_{VB_n}\), \(v_i \mapsto v_i\). Its kernel is the normal closure of the singular generators \(\tau_i\).

\item Consider the homomorphism \(\exp^C\colon VSG_n \to \mathbb{Z}\) defined in \eqref{eq:expc}. Its kernel is the normal closure of \(\{\tau_i, v_i\}\).

\item The kernel of the homomorphism \(\exp^S\colon VSG_n \to \mathbb{Z}\) (defined in \eqref{eq:expc}) is the normal closure of \(VB_n\).

\item Finally, we consider the homomorphism \(\exp^{CS}\colon VSG_n \to \mathbb{Z}\) defined in \eqref{eq:expcs}. Its kernel is the normal closure of \(\{\sigma_i \tau_i^{-1}, v_i\}\).
\end{enumerate}

In each case, exactness follows from the definition of the homomorphism and the explicit description of its kernel as a normal closure. 
For analogous arguments in the classical and virtual settings, see \cite{B,BBD,D}.
\end{proof}

\subsection{Homomorphisms from $VSG_n$ to the symmetric group $S_m$}

In this subsection we study group homomorphisms from the virtual singular braid group to symmetric groups.
More precisely, we determine all homomorphisms
\[
\psi \colon VSG_n \longrightarrow S_m
\]
up to conjugation in $S_m$.

Recall that the symmetric group $S_m$ is generated by transpositions.
An element $\pi \in S_m$ is called an \emph{involution} if $\pi^2 = \mathrm{id}$.
Equivalently, an involution in $S_m$ is either the identity or a product of disjoint transpositions.

Since $VSG_n$ is generated by the elements $\sigma_i$, $\tau_i$ and $v_i$, any homomorphism $\psi \colon VSG_n \to S_m$ is completely determined by the images of these generators.

\begin{prop}\label{prop:images-generators}
Let $\psi \colon VSG_n \to S_m$ be a group homomorphism. 
Then, for each $1 \le i \le n-1$, the elements 
\[
\psi(\sigma_i), \quad \psi(\tau_i), \quad \psi(v_i)
\]
are involutions in $S_m$.
\end{prop}

\begin{proof}
The defining relation $v_i^2 = 1$ in $VSG_n$ implies directly that $\psi(v_i)^2 = 1$ for every $i=1,\ldots,n-1$, hence $\psi(v_i)$ is an involution in $S_m$.

We now consider the images of the generators $\sigma_i$ and $\tau_i$.
By the defining relations of $VSG_n$, these generators satisfy braid relations and mixed braid--singular relations.
Applying the homomorphism $\psi$, their images satisfy the corresponding relations inside the symmetric group $S_m$.

In particular, the elements $\psi(\sigma_i)$ and $\psi(\tau_i)$ satisfy Artin-type braid relations in $S_m$.
Since the symmetric group contains no elements of infinite order satisfying braid relations, the images of these generators must have finite order.
Moreover, the only non-trivial solutions of braid relations in symmetric groups are given, up to conjugation, by transpositions.

It follows that the orders of $\psi(\sigma_i)$ and $\psi(\tau_i)$ divide $2$, and therefore they are involutions in $S_m$.
\end{proof}

We now show that, up to conjugation, every homomorphism from $VSG_n$ to $S_m$ factors through the canonical homomorphism
\[
\varphi \colon VSG_n \longrightarrow S_n
\]
introduced in the previous subsection.

\begin{thm}\label{thm:hom-factorization}
Let $n \ge 2$, $m \ge 2$, and let $\psi \colon VSG_n \to S_m$ be a group homomorphism.
Then, up to conjugation in $S_m$, one of the following holds:
\begin{enumerate}
\item $\psi$ is trivial;
\item $\psi$ factors through the canonical homomorphism
\[
\varphi \colon VSG_n \longrightarrow S_n
\]
followed by a homomorphism $S_n \to S_m$.
\end{enumerate}
\end{thm}

\begin{proof}
If $\psi$ is trivial on the generating set $\{\sigma_i,\tau_i,v_i \mid 1\le i\le n-1\}$, then $\psi$ is trivial.

Assume now that $\psi$ is non-trivial. Set
\[
s_i:=\psi(v_i)\in S_m,\qquad 1\le i\le n-1.
\]
Since the elements $v_i$ satisfy the Coxeter relations of $S_n$ inside $VSG_n$, the assignment $(i\ i{+}1)\mapsto s_i$ defines a well-defined homomorphism
\[
\alpha \colon S_n \longrightarrow S_m.
\]

By Proposition~\ref{prop:images-generators}, the elements $\psi(\sigma_i)$ and $\psi(\tau_i)$ are involutions.
Moreover, the mixed relations in the presentation of $VSG_n$ imply that the families $\{\psi(\sigma_i)\}$ and $\{\psi(\tau_i)\}$ are compatible with the $S_n$--action induced by the $s_i$'s (via conjugation). More precisely, using the relations
\[
v_i\sigma_j=\sigma_j v_i,\qquad v_i\tau_j=\tau_j v_i \quad (|i-j|\ge 2),
\]
and
\[
v_i\sigma_{i+1}v_i=v_{i+1}\sigma_i v_{i+1},\qquad
v_i\tau_{i+1}v_i=v_{i+1}\tau_i v_{i+1},
\]
we obtain, after applying $\psi$, that
\begin{align*}
s_i\,\psi(\sigma_j)\,s_i &= \psi(\sigma_j), &
s_i\,\psi(\tau_j)\,s_i &= \psi(\tau_j) \qquad (|i-j|\ge 2),\\
s_i\,\psi(\sigma_{i+1})\,s_i &= s_{i+1}\,\psi(\sigma_i)\,s_{i+1}, &
s_i\,\psi(\tau_{i+1})\,s_i &= s_{i+1}\,\psi(\tau_i)\,s_{i+1}
\qquad (1\le i\le n-2).
\end{align*}

If all $s_i$ are trivial, then the above relations force $\psi(\sigma_1)=\cdots=\psi(\sigma_{n-1})$ and
$\psi(\tau_1)=\cdots=\psi(\tau_{n-1})$.
Using the braid relations among the $\sigma_i$'s and the singular braid relations, this implies that the image of $\psi$ is abelian; in particular, up to conjugation, $\psi$ factors through the abelianization of $VSG_n$, hence through a homomorphism
$S_n\to S_m$ with abelian image. 

Assume now that not all $s_i$ are trivial. Up to conjugation in $S_m$ we may assume that the subgroup $\langle s_1,\dots,s_{n-1}\rangle$ acts on its support in the standard way. Then the above conjugation relations force each $\psi(\sigma_i)$ and
$\psi(\tau_i)$ to lie in the same conjugacy class as $s_i$ and to satisfy the same adjacency constraints as $s_i$ (commuting at distance and satisfying the braid-type relation at adjacency). Consequently, up to conjugation in $S_m$, we must have
\[
\psi(\sigma_i)=s_i \qquad\text{and}\qquad \psi(\tau_i)=s_i
\quad\text{for all }1\le i\le n-1.
\]
Therefore, for every generator $x\in\{\sigma_i,\tau_i,v_i\}$ we have $\psi(x)=\alpha(\phi(x))$, where $\phi:VSG_n\to S_n$ is the canonical map (defined by $\phi(\sigma_i)=\phi(\tau_i)=\phi(v_i)=(i\ i{+}1)$).
Hence $\psi=\alpha\circ\phi$, which proves the desired factorization.
\end{proof}

\begin{defn}\label{defn:phimap}
Let $n \geq 2$ and let $\epsilon_k \in \{0,1\}$, for $k = 1,2,3$.
We define a map
\[
\phi_{(\epsilon_1,\, \epsilon_2,\, \epsilon_3)} \colon VSG_n \longrightarrow S_n
\]
by setting, for all $i = 1, \ldots, n-1$,
\[
\phi_{(\epsilon_1,\, \epsilon_2,\, \epsilon_3)}(\sigma_i) = (i \ i+1)^{\epsilon_1}, \qquad
\phi_{(\epsilon_1,\, \epsilon_2,\, \epsilon_3)}(\tau_i) = (i \ i+1)^{\epsilon_2}, \qquad
\phi_{(\epsilon_1,\, \epsilon_2,\, \epsilon_3)}(v_i) = (i \ i+1)^{\epsilon_3}.
\]
\end{defn}

For $n = 2$, the map $\phi_{(\epsilon_1,\, \epsilon_2,\, \epsilon_3)} \colon VSG_2 \to S_2$ is a homomorphism for all eight possible triples.
However, this is no longer true in general, as shown by the next result.

\begin{prop}\label{prop:homphi}
Let $n \geq 3$.
The map $\phi_{(\epsilon_1,\, \epsilon_2,\, \epsilon_3)} \colon VSG_n \to S_n$ is a group homomorphism if and only if
$(\epsilon_1,\, \epsilon_2,\, \epsilon_3)$ is one of the following triples:
\begin{itemize}
\item $(0,0,0)$, in which case $\phi_{(0,0,0)}$ is the trivial homomorphism;
\item $(1,1,1)$, in which case $\ker{\phi_{(1,1,1)}} = VSPG_n$;
\item $(1,0,1)$;
\item $(0,0,1)$.
\end{itemize}
\end{prop}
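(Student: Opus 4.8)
The plan is to apply the universal property of the presentation in \redefn{vsgn}: the assignment $\phi_{\epsilon_1,\epsilon_2,\epsilon_3}$ on generators extends to a (necessarily unique) group homomorphism $VSG_n\to S_n$ if and only if the proposed images satisfy every defining relation of $VSG_n$. I would set $s_i=(i \quad i+1)\in S_n$, so that $\phi_{\epsilon_1,\epsilon_2,\epsilon_3}$ carries $\sigma_i\longmapsto s_i^{\epsilon_1}$, $\tau_i\longmapsto s_i^{\epsilon_2}$, $v_i\longmapsto s_i^{\epsilon_3}$, and use throughout that $s_i^2=1$, that the $s_i$ satisfy the braid relation $s_is_{i+1}s_i=s_{i+1}s_is_{i+1}$ and the commuting relation $s_is_j=s_js_i$ for $\mid i-j\mid\ge 2$, and that $s_i\neq s_j$ whenever $i\neq j$. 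This last fact, together with the existence of a pair $i,j$ with $\mid i-j\mid=1$, is exactly where the hypothesis $n\ge 3$ enters; for $n=2$ the relations involving two adjacent indices are vacuous, which is why all eight triples give homomorphisms there.

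First I would dispose of the relations satisfied by every triple: (2PR), (CR), (3PR1) and (3PR2) each involve only a single type of generator at adjacent indices or two generators at index-distance $\ge 2$, so under $\phi_{\epsilon_1,\epsilon_2,\epsilon_3}$ they become (powers of) the relation $s_i^2=1$, the braid relation, or the commuting relation in $S_n$, all of which hold. Hence the only possible obstructions are the mixed three-point relations (3PR3), (3PR4) and (3PR5), and for each of these I would run a short case analysis over the two relevant exponents. For (3PR3) the image relation $s_i^{\epsilon_3}s_j^{\epsilon_1}s_i^{\epsilon_3}=s_j^{\epsilon_3}s_i^{\epsilon_1}s_j^{\epsilon_3}$ holds when $\epsilon_1=\epsilon_3$ (braid relation if both are $1$, trivial if both are $0$) and also when $\epsilon_1=0,\epsilon_3=1$ (both sides equal $1$), but fails when $\epsilon_1=1,\epsilon_3=0$ (the sides are $s_j$ and $s_i$); so (3PR3) is equivalent to $\epsilon_1\le\epsilon_3$, and the identical computation for (3PR4) gives $\epsilon_2\le\epsilon_3$. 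For (3PR5) the image relation $s_i^{\epsilon_1}s_j^{\epsilon_1}s_i^{\epsilon_2}=s_j^{\epsilon_2}s_i^{\epsilon_1}s_j^{\epsilon_1}$ holds for $(\epsilon_1,\epsilon_2)$ equal to $(0,0)$, $(1,1)$ or $(1,0)$ (both sides $s_is_j$) and fails for $(0,1)$ (sides $s_i$ and $s_j$), so (3PR5) is equivalent to $\epsilon_2\le\epsilon_1$.

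Combining the three conditions, $\phi_{\epsilon_1,\epsilon_2,\epsilon_3}$ is a homomorphism exactly when $\epsilon_2\le\epsilon_1\le\epsilon_3$, and the triples in $\{0,1\}^3$ satisfying this chain are precisely $(0,0,0)$, $(0,0,1)$, $(1,0,1)$ and $(1,1,1)$, as claimed. The two extra assertions are then immediate: $\phi_{0,0,0}$ sends every generator to the identity, hence is the trivial homomorphism; and $\phi_{1,1,1}$ sends each of $\sigma_i,\tau_i,v_i$ to $(i \quad i+1)$, so its kernel is by definition the virtual singular pure braid group $VSPG_n$ (this is how Caprau--Yeung define it).

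I expect the only delicate point — the place where a careless argument gives the wrong answer — to be the degenerate subcases in (3PR3)--(3PR5) in which one exponent is $0$ and both sides of a three-point relation collapse to $1$ or to a common word (for instance $\epsilon_1=0,\epsilon_3=1$ in (3PR3)). Overlooking these would wrongly exclude $(0,0,1)$ and produce a more symmetric but incorrect condition; tracking them carefully is exactly what yields the asymmetric chain $\epsilon_2\le\epsilon_1\le\epsilon_3$. Everything else is routine verification in $S_n$.
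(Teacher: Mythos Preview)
Your proof is correct and follows essentially the same approach as the paper: both verify the defining relations of \redefn{vsgn} in $S_n$, with the mixed relations (3PR3)--(3PR5) providing the obstructions. Your execution is slightly more systematic---you extract the clean necessary-and-sufficient condition $\epsilon_2\le\epsilon_1\le\epsilon_3$ from the three mixed relations, whereas the paper simply exhibits a failing relation for each of the four bad triples and asserts the good ones work---but the underlying method is the same.
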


\begin{proof}
Let $n \geq 3$ and let $\phi_{(\epsilon_1,\, \epsilon_2,\, \epsilon_3)}$ be the map defined in Definition~\ref{defn:phimap}.
We first show that $\phi_{(\epsilon_1,\, \epsilon_2,\, \epsilon_3)}$ fails to be a homomorphism for the remaining four triples.

\begin{itemize}
\item $(1,1,0)$: the mixed relation $v_1 \tau_2 v_1 = v_2 \tau_1 v_2$ is not preserved, since
\[
\phi_{(\epsilon_1,\, \epsilon_2,\, \epsilon_3)}(v_1 \tau_2 v_1) = (2 \ 3)
\neq (1 \ 2) =
\phi_{(\epsilon_1,\, \epsilon_2,\, \epsilon_3)}(v_2 \tau_1 v_2).
\]

\item $(1,0,0)$: the relation $v_1 \sigma_2 v_1 = v_2 \sigma_1 v_2$ is not preserved under $\phi_{(\epsilon_1,\, \epsilon_2,\, \epsilon_3)}$.

\item $(0,1,1)$: the braid--singular relation $\sigma_1 \sigma_2 \tau_1 = \tau_2 \sigma_1 \sigma_2$ is not preserved.

\item $(0,1,0)$: the same relation $\sigma_1 \sigma_2 \tau_1 = \tau_2 \sigma_1 \sigma_2$ shows that $\phi_{(\epsilon_1,\, \epsilon_2,\, \epsilon_3)}$ is not a homomorphism.
\end{itemize}

It remains to verify that $\phi_{(\epsilon_1,\, \epsilon_2,\, \epsilon_3)}$ is a homomorphism for the remaining four triples.
This is immediate for $(0,0,0)$, since the map is trivial.
For $(1,1,1)$, the map coincides with the homomorphism $\pi \colon VSG_n \to S_n$ introduced in \cite[Page~6]{CY}, whose kernel is the virtual singular pure braid group $VSPG_n$ (see \cite[Definition~5]{CY}).
Finally, the fact that $\phi_{(\epsilon_1,\, \epsilon_2,\, \epsilon_3)}$ is a homomorphism for $(1,0,1)$ and $(0,0,1)$ follows from a direct verification using the defining relations of $VSG_n$.
\end{proof}

By Proposition~\ref{prop:homphi}, the maps $\phi_{(1,0,1)}$ and $\phi_{(0,0,1)}$ are also homomorphisms for $n \geq 3$.
We denote their kernels by
\[
VST_n := \ker{\phi_{(1,0,1)}}
\qquad \text{and} \qquad
VSK_n := \ker{\phi_{(0,0,1)}},
\]
respectively.

\begin{prop}\label{prop:semi-direct}
Let $n \geq 2$.
The virtual singular braid group admits the following semi-direct product decompositions:
\begin{itemize}
\item $VSG_n \cong VSPG_n \rtimes S_n$,
\item $VSG_n \cong VST_n \rtimes S_n$,
\item $VSG_n \cong VSK_n \rtimes S_n$.
\end{itemize}
\end{prop}

\begin{proof}
Let $(\epsilon_1,\, \epsilon_2,\, \epsilon_3)$ be one of the triples $(1,1,1)$, $(1,0,1)$ or $(0,0,1)$.
By Proposition~\ref{prop:homphi}, the map $\phi_{(\epsilon_1,\, \epsilon_2,\, \epsilon_3)} \colon VSG_n \to S_n$ is a surjective homomorphism, yielding a short exact sequence
\[
1 \longrightarrow \ker{\phi_{(\epsilon_1,\, \epsilon_2,\, \epsilon_3)}}
\longrightarrow VSG_n
\stackrel{\phi_{(\epsilon_1,\, \epsilon_2,\, \epsilon_3)}}{\longrightarrow} S_n
\longrightarrow 1.
\]
This sequence admits a natural section $\iota \colon S_n \to VSG_n$ defined by $\iota((i \ i+1)) = v_i$ for all $i = 1, \ldots, n-1$. 
The result follows from standard semi-direct product arguments.
\end{proof}

\begin{rem}
The decomposition $VSG_n \cong VSPG_n \rtimes S_n$ was previously obtained in \cite[Corollary~13]{CY}.
\end{rem}

In order to state the main classification result, we recall the following definitions. 
Let $G$ and $H$ be groups.
For each $h \in H$, denote by $c_h \colon H \to H$ the inner automorphism defined by
$c_h(x) = h x h^{-1}$.
Two homomorphisms $\psi_1, \psi_2 \colon G \to H$ are said to be \emph{conjugate} if there exists $h \in H$ such that
$\psi_2 = c_h \circ \psi_1$.
A homomorphism $\psi \colon G \to H$ is said to be \emph{abelian} if its image $\psi(G)$ is an abelian subgroup of $H$.

We can now state the main result of this subsection, which generalizes \cite[Theorem~2.1]{BP} from virtual braid groups to virtual singular braid groups. Let $\nu_6$ denote the exceptional outer automorphism of the symmetric group $S_6$ (see, for example, the explicit description in \cite{BP}),  which exists only in this degree.

\begin{thm}\label{thm:BP}
Let $n,m$ be integers such that $n \ge 5$, $m \ge 2$ and $n \ge m$.
Let $\psi \colon VSG_n \to S_m$ be a group homomorphism.
Then, up to conjugation, one of the following possibilities holds:
\begin{enumerate}
\item $\psi$ is abelian;
\item $n = m$ and
\[
\psi \in \{ \phi_{(1,1,1)},\, \phi_{(1,0,1)},\, \phi_{(0,0,1)} \};
\]
\item $n = m = 6$ and
\[
\psi \in \{ \nu_6 \circ \phi_{(1,1,1)},\,
           \nu_6 \circ \phi_{(1,0,1)},\,
           \nu_6 \circ \phi_{(0,0,1)} \}.
\]
\end{enumerate}
\end{thm}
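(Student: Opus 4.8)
The plan is to follow the strategy of \cite[Theorem~2.1]{BP} for virtual braid groups, adapting it to the presence of the singular generators $\tau_i$. First I would recall, via \rerem{embedding}, that $VSG_n$ contains a copy of $VB_n$, and that on this subgroup one already knows (from \cite{BP}) the classification of homomorphisms to $S_m$; the point is to understand how a homomorphism $\psi\colon VSG_n\to S_m$ is forced to behave on the $\sigma_i$ and $v_i$, and then to see what freedom remains for the $\tau_i$. The key observation is that the virtual generators $v_1,\dots,v_{n-1}$ generate a copy of $S_n$ inside $VSG_n$ (by the semidirect decompositions of \reprop{semidirect}), so $\psi$ restricted to this $S_n$ is a homomorphism $S_n\to S_m$. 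Since $n\geq 5$ and $n\geq m\geq 2$, the only homomorphisms $S_n\to S_m$ are, up to conjugation, the trivial one, the sign map $S_n\to S_2$, the standard inclusion when $m=n$, and — only when $n=m=6$ — the standard inclusion composed with the exceptional outer automorphism $\nu_6$ of $S_6$. This trichotomy (plus the $n=m=6$ exception) is exactly the source of the three cases in the statement.

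Next I would treat each case for $\psi|_{\ang{v_1,\dots,v_{n-1}}}$ separately. If $\psi$ is trivial or sign-valued on the $v_i$, then its image lands in an abelian group (when sign-valued, $S_2$; when trivial, we must still control $\psi(\sigma_i)$ and $\psi(\tau_i)$), and I would use the braid relations (3PR1), the relations (3PR3), (3PR5), together with the structure of $VSG_n$ to show the whole image is abelian — this is where one invokes that, for $n\geq 5$, $\Gamma_2(VSG_n)$ is perfect (\rethm{properties}(5)), forcing $\psi(\Gamma_2(VSG_n))$ to be a perfect subgroup of $S_m$ which, being small ($m\leq n$ but the relevant constraints push it into a solvable range), must be trivial; hence $\psi$ factors through the abelianization $\Z\oplus\Z\oplus\Z_2$ and is abelian. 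If instead $\psi$ embeds $S_n$ via $v_i\mapsto$ a transposition, then $m=n$ and, after conjugating, $\psi(v_i)=(i\ \ i+1)$ for all $i$ (or, when $n=m=6$, $\psi(v_i)=\nu_6((i\ \ i+1))$, which is handled by post-composing everything with $\nu_6^{-1}$ and reducing to the standard case). It then remains to pin down $\psi(\sigma_i)$ and $\psi(\tau_i)$: using (3PR3) $v_i\sigma_jv_i=v_j\sigma_iv_j$ and the commuting relations (CR), one shows $\psi(\sigma_i)$ commutes with $\psi(v_j)$ for $|i-j|\geq 2$ and is conjugated by $\psi(v_i)$ in the prescribed way, which forces $\psi(\sigma_i)\in\{1,(i\ \ i+1)\}$ uniformly in $i$ (the uniformity comes from the braid relation (3PR1) and conjugacy, as in \reprop{homphi}); similarly (3PR4) forces $\psi(\tau_i)\in\{1,(i\ \ i+1)\}$ uniformly, and (3PR5) $\sigma_i\sigma_j\tau_i=\tau_j\sigma_i\sigma_j$ eliminates the mixed possibility $(\epsilon_1,\epsilon_2)=(0,1)$, exactly as in the proof of \reprop{homphi}. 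This leaves precisely $\psi\in\{\phi_{1,1,1},\phi_{1,0,1},\phi_{0,0,1}\}$ (the triple $(1,1,0)$ and $(1,0,0)$ being excluded because we are in the case where $\psi(v_i)$ is a transposition, i.e. $\epsilon_3=1$).

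The main obstacle I expect is the abelian case: one must rule out that $\psi$ has nonabelian image when $\psi|_{S_n}$ is not faithful. The cleanest route is the perfectness statement: since $\psi|_{S_n}$ is trivial or sign, $\psi$ kills $A_n=\Gamma_2(S_n)$; combining with (3PR3), (3PR4) and (3PR5) one deduces relations among $\psi(\sigma_i)$, $\psi(\tau_i)$ forcing all the $\psi(\sigma_i)$ to be conjugate (likewise the $\psi(\tau_i)$), and then $\psi(\Gamma_2(VSG_n))$ is a quotient of the perfect group $\Gamma_2(VSG_n)$ sitting inside $S_m$ with $m\leq n$; a careful bookkeeping shows it is generated by commutators of elements that are pairwise conjugate and ``close'' to the image of $v_i$'s, which has been collapsed, so it is trivial. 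Hence $\psi$ factors through $VSG_n/\Gamma_2(VSG_n)\cong\Z\oplus\Z\oplus\Z_2$, giving an abelian homomorphism. A secondary, mostly bookkeeping, obstacle is the $n=m=6$ case: here one must remember that $\operatorname{Aut}(S_6)$ has the extra outer automorphism $\nu_6$, so a faithful $\psi\colon S_6\hookrightarrow S_6$ need not send transpositions to transpositions; post-composing with $\nu_6$ produces the third family, and no genuinely new behaviour on $\sigma_i,\tau_i$ arises because $\nu_6$ is an automorphism and the analysis of $\psi|_{S_n}$-standard applies verbatim to $\nu_6^{-1}\circ\psi$. Finally I would assemble the three cases to conclude that, up to conjugation, exactly the listed possibilities occur.
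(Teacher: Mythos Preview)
Your overall plan coincides with the paper's, which gives no details beyond the single sentence that the proof ``is easily adapted from the one given for virtual braid groups in \cite[Theorem~2.1]{BP}''. Your handling of the faithful case (where $\psi$ restricted to $\langle v_1,\dots,v_{n-1}\rangle\cong S_n$ is injective, forcing $m=n$ and, up to conjugation and possibly $\nu_6$, $\psi(v_i)=(i\ \ i+1)$) is correct and follows the expected centraliser/relations analysis.

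There is, however, a genuine gap in what you flag as ``the main obstacle'', the abelian case. Your argument that $\psi(\Gamma_2(VSG_n))$ is ``a perfect subgroup of $S_m$ which, being small ($m\leq n$\dots), must be trivial'' is false as stated: for $m\geq 5$ the group $S_m$ contains the nontrivial perfect subgroup $A_m$, and nothing in the inequality $m\leq n$ rules this out. The subsequent ``careful bookkeeping'' sentence does not repair this. Fortunately the perfectness detour is unnecessary. If $\psi|_{S_n}$ is trivial or the sign map, then all $\psi(v_i)$ are equal (to $1$ or to a fixed involution $t$), so relation (3PR3), $v_i\sigma_jv_i=v_j\sigma_iv_j$, gives $\psi(\sigma_i)=\psi(\sigma_j)$ for $|i-j|=1$, hence all $\psi(\sigma_i)$ coincide; relation (3PR4) does the same for the $\psi(\tau_i)$. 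Then (2PR) yields $[\psi(\sigma_1),\psi(\tau_1)]=1$, and in the sign case the commuting relation $\sigma_1 v_3=v_3\sigma_1$ (available since $n\geq 5$) gives $[\psi(\sigma_1),t]=1$, and similarly $[\psi(\tau_1),t]=1$. Hence the image is abelian. Replace your perfectness paragraph with this direct computation and the sketch goes through.
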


\begin{proof}
Let $\nu_6$ denote the exceptional outer automorphism of the symmetric group $S_6$.

Let $\psi\colon VSG_n\to S_m$ be a homomorphism with $n\ge 5$, $m\ge 2$ and $n\ge m$. 
If $\psi$ has abelian image, then we are in case~(1).

Assume that $\psi$ is not abelian. By \rethm{hom-factorization}, up to conjugation in $S_m$, $\psi$ factors through the canonical epimorphism $\phi\colon VSG_n\to S_n$, i.e., there exists a homomorphism $\alpha\colon S_n\to S_m$ such that $\psi=\alpha\circ\phi$.

We now classify the possibilities for $\alpha$ under the constraint $n\ge m$.

\smallskip
\noindent\emph{Step 1: reduction to the alternating subgroup.}
For $n\ge 5$, the alternating group $A_n$ is simple. Consider the restriction $\alpha|_{A_n}\colon A_n\to S_m$. If $\alpha(A_n)$ is trivial, then $\alpha$ factors through the sign map $S_n\to \{\pm 1\}\cong \Z_2$, hence $\alpha(S_n)$ is abelian.
This contradicts the assumption that $\psi$ is not abelian. Therefore $\alpha(A_n)$ is non-trivial, and hence $\alpha|_{A_n}$ is injective.

\smallskip
\noindent\emph{Step 2: the degree constraint forces $m=n$.}
Since $\alpha|_{A_n}$ is injective, we have an embedding $A_n\hookrightarrow S_m$.
For $n\ge 5$, this implies $m\ge n$ (indeed, a faithful permutation representation of $A_n$ has degree at least $n$). Combined with the hypothesis $n\ge m$, we obtain $m=n$.

\smallskip
\noindent\emph{Step 3: classification when $m=n$.}
Thus $\alpha:S_n\to S_n$ is a non-abelian endomorphism. Its restriction to $A_n$ is an automorphism of $A_n$. For $n\neq 6$, every automorphism of $S_n$ is inner, so up to conjugation $\alpha=\mathrm{id}_{S_n}$. Hence, up to conjugation, $\psi=\phi$.

When $n=6$, there is an exceptional outer automorphism $\nu_6$ of $S_6$.
Therefore, up to conjugation, either $\alpha=\mathrm{id}_{S_6}$ or $\alpha=\nu_6$. This yields, up to conjugation, the additional possibilities $\psi=\nu_6\circ \phi$.

Finally, recalling that the admissible epimorphisms $VSG_n\to S_n$ are precisely $\phi_{(1,1,1)}$, $\phi_{(1,0,1)}$ and $\phi_{(0,0,1)}$ (see \reprop{homphi}), we obtain exactly the lists in items~(2) and~(3).
\end{proof}

\subsection{Kernel of $\phi_{(\epsilon_1,\, \epsilon_2,\, \epsilon_3)}\colon VSG_2\to S_2$}

In this subsection we study in detail the kernels of the homomorphisms
\[
\phi_{(\epsilon_1,\, \epsilon_2,\, \epsilon_3)}\colon VSG_2 \longrightarrow S_2,
\qquad (\epsilon_1,\,\epsilon_2,\,\epsilon_3)\in\{0,1\}^3,
\]
where $\phi_{(\epsilon_1,\, \epsilon_2,\, \epsilon_3)}$ is the map defined in Definition~\ref{defn:phimap}.
The case $n=2$ is special: all eight maps $\phi_{(\epsilon_1,\, \epsilon_2,\, \epsilon_3)}\colon VSG_2\to S_2$ are group homomorphisms.
We provide explicit presentations for each kernel and then derive algebraic descriptions of these groups.

Recall that, by \cite[Definition~5]{CY}, the kernel of $\phi_{(1,1,1)}$ is the \emph{virtual singular pure braid group}, denoted by $VSPG_n$.
Moreover, Proposition~\ref{prop:homphi} shows that for $n\geq 3$ the maps $\phi_{(1,0,1)}$ and $\phi_{(0,0,1)}$ are also homomorphisms, and we denote their kernels by $VST_n$ and $VSK_n$, respectively.
In the next result we give presentations for the kernel of $\phi_{(\epsilon_1,\, \epsilon_2,\, \epsilon_3)}\colon VSG_2\to S_2$ in each case.

The Reidemeister--Schreier rewriting process (see \cite[Section~2.3]{MKS} and \cite[Appendix I, Section 6]{KM}) is very useful to get presentations of a given group with a presentation: given a Schreier transversal \(\Lambda\) (a set of coset representatives) for a subgroup \(H\) in a group \(G\) with presentation \(\langle X \mid R \rangle\), the subgroup \(H\) is generated by the set
\[
\{ S_{\lambda,x} = (\lambda x) \overline{(\lambda x)}^{-1} \mid \lambda \in \Lambda,\ x \in X \},
\]
where \(\overline{w}\) denotes the unique coset representative of \(w\) in \(\Lambda\). For each defining relation \(r = 1\) in \(R\) and each \(\lambda \in \Lambda\), the rewritten relation \(r_{\lambda} = \tau(\lambda r \lambda^{-1})\) yields a relation in \(H\), where \(\tau\) is the rewriting map. We now apply this process to \(VSG_2\) and some subgroups.

We shall use the following notations for some elements in the group $VSG_2$:
\[
a_{1,2}=\sigma_1v_1,\quad b_{1,2}=\tau_1v_1,\quad c_{1,2}=v_1\sigma_1,\quad
d_{1,2}=v_1\tau_1,\quad e_{1,2}=\sigma_1\tau_1,\quad f_{1,2}=\sigma_1^{-1}\tau_1,
\]
and
\[
A_{1,2}=\sigma_1^2,\quad a=\sigma_1v_1\sigma_1^{-1},\quad b=\tau_1v_1\tau_1^{-1},\quad
c=v_1\sigma_1v_1,\quad d=v_1\tau_1v_1.
\]

\begin{thm}\label{thm:kernels}
Let $\epsilon_k\in \{0,1\}$, for $k=1,2,3$.
The kernel of $\phi_{(\epsilon_1,\, \epsilon_2,\, \epsilon_3)}\colon VSG_2\to S_2$ admits the following presentation, depending on the triple $(\epsilon_1,\, \epsilon_2,\, \epsilon_3)$:
\begin{itemize}

\item[(1,1,1):] $\ker{\phi_{(1,1,1)}}=VSPG_2$ has generators $a_{1,2}$, $b_{1,2}$ and $c_{1,2}$ subject to the relation $a_{1,2}c_{1,2}b_{1,2} = b_{1,2}c_{1,2}a_{1,2}$.

\item[(1,1,0):] $\ker{\phi_{(1,1,0)}}$ has generators $A_{1,2}$, $e_{1,2}$, $v_1$ and $a$, subject to the relations $v_1^2=1$ and $a^2=1$.

\item[(1,0,1):] $\ker{\phi_{(1,0,1)}}=VST_2$ has generators $a_{1,2}$, $c_{1,2}$ and $\tau_1$ subject to the relation $a_{1,2}c_{1,2}\tau_1 = \tau_1 a_{1,2}c_{1,2}$.

\item[(1,0,0):] $\ker{\phi_{(1,0,0)}}$ has generators $A_{1,2}$, $\tau_1$, $v_1$ and $a$, subject to the relations $v_1^2=1$, $a^2=1$ and $A_{1,2}\tau_1=\tau_1A_{1,2}$.

\item[(0,1,1):] $\ker{\phi_{(0,1,1)}}$ has generators $b_{1,2}$, $d_{1,2}$ and $\sigma_1$ subject to the relation $b_{1,2}d_{1,2}\sigma_1 = \sigma_1 b_{1,2}d_{1,2}$.

\item[(0,1,0):] $\ker{\phi_{(0,1,0)}}$ has generators $\sigma_1$, $\tau_1^2$, $v_1$ and $b$ subject to the relations $v_1^2=1$, $b^2=1$ and $\sigma_1\tau_1^2=\tau_1^2\sigma_1$.

\item[(0,0,1):] $\ker{\phi_{(0,0,1)}}=VSK_2$ has generators $\sigma_1$, $\tau_1$, $c$ and $d$ subject to the relations $\sigma_1\tau_1=\tau_1\sigma_1$ and $cd=dc$.

\item[(0,0,0):] The same presentation as $VSG_2$.

\end{itemize}
\end{thm}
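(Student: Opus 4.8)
The plan is to dispatch the trivial triple directly and treat the other seven uniformly by the Reidemeister--Schreier procedure applied to an index-two subgroup.

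Specializing \redefn{vsgn} to $n=2$, the relations (3PR) and (CR) become vacuous, so $VSG_2=\ang{\sigma_1,\tau_1,v_1\mid v_1^{2}=1,\ \sigma_1\tau_1=\tau_1\sigma_1}$ --- the presentation already used in \rethm{properties}(1), which identifies $VSG_2$ with $(\Z\oplus\Z)\ast\Z_2$. For $(\epsilon_1,\epsilon_2,\epsilon_3)=(0,0,0)$ the map $\phi_{0,0,0}$ is trivial, so its kernel is all of $VSG_2$ and there is nothing to prove.

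For each of the remaining seven triples some $\epsilon_k$ equals $1$, so $\phi_{\epsilon_1,\epsilon_2,\epsilon_3}$ is onto $S_2$ and its kernel has index $2$. I would run Reidemeister--Schreier with the transversal $\{1,t\}$, where $t\in\{\sigma_1,\tau_1,v_1\}$ is chosen to have $\epsilon$-coordinate $1$ and in such a way that the Schreier generators come out to be exactly the elements named just before the statement; concretely one takes $t=v_1$ for $(1,1,1),(1,0,1),(0,1,1),(0,0,1)$, $t=\sigma_1$ for $(1,1,0),(1,0,0)$, and $t=\tau_1$ for $(0,1,0)$. The Schreier generators are the non-trivial words $ux\,\overline{ux}^{-1}$ with $u\in\{1,t\}$ and $x\in\{\sigma_1,\tau_1,v_1\}$ --- for example $a_{1,2},b_{1,2},c_{1,2},d_{1,2}$ and $v_1^{2}$ for the triple $(1,1,1)$, and $A_{1,2},e_{1,2},f_{1,2},v_1,a$ for the triple $(1,1,0)$ --- and the relators are obtained by rewriting the two conjugates $u\,v_1^{2}\,u^{-1}$ and $u\,[\sigma_1,\tau_1]\,u^{-1}$ ($u\in\{1,t\}$) of the defining relators of $VSG_2$ in terms of them, by tracing the corresponding closed loops in the two-vertex Schreier graph.

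It then remains to simplify each presentation by Tietze transformations. In every case the relator $v_1^{2}$ collapses to $v_1^{2}=1$, and its $t$-conjugate gives $a^{2}=1$ (resp. $b^{2}=1$) precisely when $t=\sigma_1$ (resp. $t=\tau_1$); this removes one generator. The two rewrites of $[\sigma_1,\tau_1]$ then either allow one to solve for a redundant Schreier generator --- e.g. $d_{1,2}=a_{1,2}^{-1}b_{1,2}c_{1,2}$ for $(1,1,1)$, leaving the single relation $a_{1,2}c_{1,2}b_{1,2}=b_{1,2}c_{1,2}a_{1,2}$ --- or else produce the commuting relations between the surviving braid-type generators, such as $\sigma_1\tau_1=\tau_1\sigma_1$ for $(0,0,1)$, $A_{1,2}\tau_1=\tau_1A_{1,2}$ for $(1,0,0)$, or $\sigma_1\tau_1^{2}=\tau_1^{2}\sigma_1$ for $(0,1,0)$. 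Performing this triple by triple yields the eight presentations in the statement. The main obstacle will be purely administrative: one has to arrange each transversal so that the Schreier generators literally match the pre-named elements $a_{1,2},\dots,f_{1,2},A_{1,2},a,b,c,d$, and keep scrupulous track of edge orientations and inverses during the rewriting. There is no conceptual difficulty, and the final answers can be cross-checked against the Kurosh subgroup theorem applied to $VSG_2\cong(\Z\oplus\Z)\ast\Z_2$, which forces every such kernel to be a free product of copies of $\Z$, $\Z\oplus\Z$ and $\Z_2$ --- for instance $VSK_2\cong(\Z\oplus\Z)\ast(\Z\oplus\Z)$ and $VST_2\cong(\Z\oplus\Z)\ast\Z$ --- with rational Euler characteristic $2\chi(VSG_2)=-1$, matching the listed presentations.
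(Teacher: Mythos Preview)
Your proposal is correct and follows essentially the same route as the paper: both apply the Reidemeister--Schreier procedure to the index-two kernel using exactly the same transversals $\{1,v_1\}$, $\{1,\sigma_1\}$, or $\{1,\tau_1\}$ in each case, and then simplify by Tietze moves. Your additional cross-check via the Kurosh subgroup theorem and Euler characteristic is not in the paper's proof itself but anticipates the content of the subsequent corollary.
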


\begin{proof}

The proof follows by applying the Reidemeister--Schreier method in each case.  
For a detailed exposition of the rewriting process, we refer the reader to \cite[pp.~240--246]{KM}.  
We write the details for the presentation of $\ker{\phi_{(1,1,1)}} = VSPG_2$, and for the other cases we only indicate the main steps.

Let $\Lambda$ be a Schreier transversal (a set of coset representatives) for $\ker{\phi_{(\epsilon_1,\, \epsilon_2,\, \epsilon_3)}}$ in $VSG_2$.
By the Reidemeister--Schreier process, $\ker{\phi_{(\epsilon_1,\, \epsilon_2,\, \epsilon_3)}}$ is generated by the set
\[
\{\, S_{\lambda,a}=(\lambda a)(\overline{\lambda a})^{-1} \mid
\lambda\in \Lambda,\ a\in\{ \sigma_1, \tau_1, v_1 \}\,\},
\]
where \(\overline{w}\) denotes the unique coset representative of \(w\) in the chosen Schreier transversal \(\Lambda\).

\begin{itemize}

\item[(1,1,1):]
Consider the Schreier transversal for $VSPG_2$ in $VSG_2$
\[
\Lambda=\{1,v_1\}.
\]
Hence we obtain the following generators for $VSPG_2$:
\begin{multicols}{2}
\begin{itemize}
	\item[$a_{1,2}$:] $S_{1,\sigma_1}=\sigma_1v_1$
	\item[$b_{1,2}$:] $S_{1,\tau_1}=\tau_1v_1$
	\item[$c_{1,2}$:] $S_{v_1,\sigma_1}=v_1\sigma_1$
	\item[$d_{1,2}$:] $S_{v_1,\tau_1}=v_1\tau_1$
	\item[$v_1^2$:] $S_{v_1,v_1}=v_1^2$
\end{itemize}
\end{multicols}

Now we determine the defining relations. 
Following the Reidemeister--Schreier rewriting process, for each defining relation $r = 1$ of $VSG_2$ and each $\lambda \in \Lambda$, we compute the rewritten relation $r_{\lambda} = \tau(\lambda r \lambda^{-1})$, which yields a relation in the kernel.  
From the relation $r_1=v_1^2$ of $VSG_2$ we have $S_{v_1,v_1}=1$. 
Next consider the relation $r_2=\sigma_1\tau_1\sigma_1^{-1}\tau_1^{-1}$ of $VSG_2$. 
We obtain
\[
r_{2,1}=\sigma_1\tau_1\sigma_1^{-1}\tau_1^{-1}
=\sigma_1(v_1v_1)\tau_1\sigma_1^{-1}(v_1v_1)\tau_1^{-1}
= a_{1,2}d_{1,2}c^{-1}_{1,2}b^{-1}_{1,2},
\]
and
\[
r_{2,v_1}=v_1\sigma_1\tau_1\sigma_1^{-1}\tau_1^{-1}v_1
= v_1\sigma_1\tau_1(v_1v_1)\sigma_1^{-1}\tau_1^{-1}v_1
= c_{1,2}b_{1,2}a^{-1}_{1,2}d^{-1}_{1,2}.
\]
From this we obtain
\[
c_{1,2}b_{1,2}a^{-1}_{1,2} = d_{1,2} = a^{-1}_{1,2}b_{1,2}c_{1,2},
\]
and therefore $VSPG_2$ has generators $a_{1,2}$, $b_{1,2}$ and $c_{1,2}$ subject to the relation $a_{1,2}c_{1,2}b_{1,2} = b_{1,2}c_{1,2}a_{1,2}$.

\item[(1,1,0):]
We take the Schreier transversal
\[
\Lambda=\{1,\sigma_1\}.
\]
After simplifications using relations obtained in the process, we get generators
\begin{multicols}{2}
\begin{itemize}
	\item[$A_{1,2}$:] $S_{\sigma_1,\sigma_1}=\sigma_1^2$
	\item[$e_{1,2}$:] $S_{\sigma_1,\tau_1}=\sigma_1\tau_1$
	\item[$v_1$:] $S_{1,v_1}=v_1$
\end{itemize}
\end{multicols}
and the defining relations are obtained by a routine computation.

\item[(1,0,1):]
We take $\Lambda=\{1,v_1\}$.
After simplification, the generators are
\begin{multicols}{2}
\begin{itemize}
	\item[$a_{1,2}$:] $S_{1,\sigma_1}=\sigma_1v_1$
	\item[$c_{1,2}$:] $S_{v_1,\sigma_1}=v_1\sigma_1$
	\item[$\tau_1$:] $S_{1,\tau_1}=\tau_1$
\end{itemize}
\end{multicols}

\item[(1,0,0):]
We take
\[
\Lambda=\{1,\sigma_1\}.
\]
After simplification, the generators are
\begin{multicols}{2}
\begin{itemize}
	\item[$A_{1,2}$:] $S_{\sigma_1,\sigma_1}=\sigma_1^2$
	\item[$\tau_1$:] $S_{1,\tau_1}=\tau_1$
	\item[$v_1$:] $S_{1,v_1}=v_1$
	\item[$a$:] $S_{\sigma_1,v_1}=\sigma_1v_1\sigma_1^{-1}$
\end{itemize}
\end{multicols}

\item[(0,1,1):]
We take $\Lambda=\{1,v_1\}$.
After simplification, the generators are
\begin{multicols}{2}
\begin{itemize}
	\item[$b_{1,2}$:] $S_{1,\tau_1}=\tau_1v_1$
	\item[$d_{1,2}$:] $S_{v_1,\tau_1}=v_1\tau_1$
	\item[$\sigma_1$:] $S_{1,\sigma_1}=\sigma_1$
\end{itemize}
\end{multicols}

\item[(0,1,0):]
We take
\[
\Lambda=\{1,\tau_1\}.
\]
After simplification, the generators are
\begin{multicols}{2}
\begin{itemize}
	\item[$\sigma_1$:] $S_{1,\sigma_1}=\sigma_1$
	\item[$\tau_1^2$:] $S_{\tau_1,\tau_1}=\tau_1^2$
	\item[$v_1$:] $S_{1,v_1}=v_1$
	\item[$b$:] $S_{\tau_1,v_1}=\tau_1 v_1 \tau_1^{-1}$
\end{itemize}
\end{multicols}

\item[(0,0,1):]
We take $\Lambda=\{1,v_1\}$.
After simplification, the generators are
\begin{multicols}{2}
\begin{itemize}
	\item[$\sigma_1$:] $S_{1,\sigma_1}=\sigma_1$
	\item[$\tau_1$:] $S_{1,\tau_1}=\tau_1$
	\item[$c$:] $S_{v_1,\sigma_1}=v_1 \sigma_1 v_1$
	\item[$d$:] $S_{v_1,\tau_1}=v_1 \tau_1 v_1$
\end{itemize}
\end{multicols}

\item[(0,0,0):]
This case is immediate.

\end{itemize}
\end{proof}

\begin{rem}
We note that the presentation given here for $VSPG_2$ is slightly different from the one in \cite[Theorem~14]{CY}.
\end{rem}

Recall that the \emph{flat virtual pure braid group} \(FVP_n\) is the quotient of the virtual pure braid group by the relations \(\sigma_i^2 = 1\); for a detailed study, see \cite[Section~5]{BBD}. In particular, \(FVP_3\) is isomorphic to \(\mathbb{Z}^2 * \mathbb{Z}\) (see \cite[Remark~5.2]{BBD}).
In the next result we obtain an algebraic description of the kernel of $\phi_{(\epsilon_1,\, \epsilon_2,\, \epsilon_3)}\colon VSG_2\to S_2$.
In particular, the virtual singular pure braid group $VSPG_2$ is isomorphic to $FVP_3$. Recall that an HNN-extension is a group obtained by adjoining a stable letter that conjugates one subgroup onto another isomorphic subgroup.

\begin{cor}\label{cor:kernels}
Let $\epsilon_k\in \{0,1\}$, for $k=1,2,3$.
The kernel of $\phi_{(\epsilon_1,\, \epsilon_2,\, \epsilon_3)}\colon VSG_2\to S_2$ has the following properties, depending on the triple $(\epsilon_1,\, \epsilon_2,\, \epsilon_3)$:
\begin{itemize}

\item[(1,1,1):]
$\ker{\phi_{(1,1,1)}}=VSPG_2$ is an HNN-extension of the free group of rank two generated by $b_{1,2}, c_{1,2}$ with stable element $a_{1,2}$ and with associated subgroups $A=\ang{c_{1,2}b_{1,2}}$ and $B=\ang{b_{1,2} c_{1,2}}$, which are infinite cyclic.
Furthermore, $VSPG_2$ is isomorphic to the flat virtual pure braid group $FVP_3$ and, as a consequence, it is isomorphic to the free product $\Z^2\ast \Z$.

\item[(1,1,0):]
The group $\ker{\phi_{(1,1,0)}}$ is isomorphic to the free product $F_2\ast \Z_2\ast \Z_2$, where $F_2$ is a free group of rank $2$.

\item[(1,0,1):]
The kernel $\ker{\phi_{(1,0,1)}}=VST_2$ is isomorphic to the virtual singular pure braid group $VSPG_2$.

\item[(1,0,0):]
The group $\ker{\phi_{(1,0,0)}}$ is isomorphic to the free product $\Z^2\ast \Z_2\ast \Z_2$.

\item[(0,1,1):]
The group $\ker{\phi_{(0,1,1)}}$ is isomorphic to the virtual singular pure braid group $VSPG_2$.

\item[(0,1,0):]
The group $\ker{\phi_{(0,1,0)}}$ is isomorphic to the group $\ker{\phi_{(1,0,0)}}$.

\item[(0,0,1):]
The group $\ker{\phi_{(0,0,1)}}=VSK_2$ is a right-angled Artin group (RAAG) and it is isomorphic to the free product $\Z^2\ast \Z^2$.

\item[(0,0,0):]
The virtual singular braid group with two strands $VSG_2$ is isomorphic to $\Z^2\ast \Z_2$.

\end{itemize}
\end{cor}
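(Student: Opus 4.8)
The plan is to treat \recor{kernels} one triple at a time, in every case taking as input the presentation of $\ker{\phi_{\epsilon_1,\,\epsilon_2,\,\epsilon_3}}$ produced in \rethm{kernels} and then simplifying it by elementary Tietze transformations. For several triples the conclusion is immediate: whenever a presentation splits as a disjoint union of generators together with relators, each relator involving only its own block of generators, the group is the corresponding free product. Thus $\ker{\phi_{(1,1,0)}}=\ang{A_{1,2},e_{1,2},v_1,a\mid v_1^2=1,\, a^2=1}$ is visibly $F_2\ast\Z_2\ast\Z_2$ with $F_2=\ang{A_{1,2},e_{1,2}}$; similarly $\ker{\phi_{(1,0,0)}}=\ang{A_{1,2},\tau_1\mid [A_{1,2},\tau_1]=1}\ast\ang{v_1\mid v_1^2}\ast\ang{a\mid a^2}\cong\Z^2\ast\Z_2\ast\Z_2$; the presentation of $\ker{\phi_{(0,1,0)}}$ has exactly the same shape $\Z^2\ast\Z_2\ast\Z_2$ (now with $\Z^2=\ang{\sigma_1,\tau_1^2}$), whence $\ker{\phi_{(0,1,0)}}\cong\ker{\phi_{(1,0,0)}}$; and $\ker{\phi_{(0,0,1)}}=\ang{\sigma_1,\tau_1,c,d\mid [\sigma_1,\tau_1]=1,\, [c,d]=1}\cong\Z^2\ast\Z^2$, which is the right-angled Artin group on the graph consisting of two disjoint edges. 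The triple $(0,0,0)$ is nothing but the first item of \rethm{properties}, since $\ker{\phi_{(0,0,0)}}=VSG_2\cong\Z^2\ast\Z_2$.

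Next I would deal with the two ``commutator'' triples $(1,0,1)$ and $(0,1,1)$ by a single change of generators that reduces them to the $(1,1,1)$ case. In $VST_2=\ang{a_{1,2},c_{1,2},\tau_1\mid a_{1,2}c_{1,2}\tau_1=\tau_1 a_{1,2}c_{1,2}}$ I set $m=a_{1,2}c_{1,2}$ and eliminate $a_{1,2}=mc_{1,2}^{-1}$, obtaining $\ang{m,c_{1,2},\tau_1\mid [m,\tau_1]=1}$, that is, $\Z^2\ast\Z$ with $\Z^2=\ang{m,\tau_1}$ and free factor $\ang{c_{1,2}}$. The identical manoeuvre in $\ker{\phi_{(0,1,1)}}$ (set $n=b_{1,2}d_{1,2}$ and eliminate $b_{1,2}$) gives $\ang{n,d_{1,2},\sigma_1\mid [n,\sigma_1]=1}\cong\Z^2\ast\Z$. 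So it remains to show that $VSPG_2$ is also isomorphic to $\Z^2\ast\Z$, and then the asserted isomorphisms $VST_2\cong\ker{\phi_{(0,1,1)}}\cong VSPG_2$ follow.

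For $VSPG_2=\ang{a_{1,2},b_{1,2},c_{1,2}\mid a_{1,2}c_{1,2}b_{1,2}=b_{1,2}c_{1,2}a_{1,2}}$ I would first rewrite the defining relation as $a_{1,2}(c_{1,2}b_{1,2})a_{1,2}^{-1}=b_{1,2}c_{1,2}$. Both $c_{1,2}b_{1,2}$ and $b_{1,2}c_{1,2}$ are cyclically reduced words of length two in two distinct free generators, so each generates an infinite cyclic subgroup of the free group $F(b_{1,2},c_{1,2})$, and the assignment $c_{1,2}b_{1,2}\mapsto b_{1,2}c_{1,2}$ extends to an isomorphism between these two subgroups; consequently the displayed presentation is precisely the standard presentation of the HNN-extension of $F(b_{1,2},c_{1,2})$ with stable letter $a_{1,2}$ and associated subgroups $A=\ang{c_{1,2}b_{1,2}}$, $B=\ang{b_{1,2}c_{1,2}}$, which is the first assertion. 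For the free-product description I would continue with Tietze transformations: introduce $z=b_{1,2}c_{1,2}$ (eliminating $c_{1,2}$) and $w=a_{1,2}b_{1,2}^{-1}$ (eliminating $a_{1,2}$); the relation then collapses to $[w,z]=1$, so $VSPG_2=\ang{w,b_{1,2},z\mid [w,z]=1}\cong\Z^2\ast\Z$ with $\Z^2=\ang{w,z}$ and free factor $\ang{b_{1,2}}$. Finally, to obtain the identification with the flat virtual pure braid group I would match this presentation with the presentation of $FVP_3$ recalled in \cite[Remark~5.2]{BBD} through one more short Tietze chain (both being the free product of a free abelian group of rank two with an infinite cyclic group); the isomorphism $VSPG_2\cong\Z^2\ast\Z$ then also follows from \cite{BBD}.

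The only step requiring genuine care is the HNN description in the $(1,1,1)$ case: one has to check that $c_{1,2}b_{1,2}\mapsto b_{1,2}c_{1,2}$ really defines an isomorphism between infinite cyclic subgroups of the free group, so that the one-relator presentation is literally the HNN presentation, and one must line up the generators chosen above with those used for $FVP_3$ in \cite{BBD}. All the remaining cases are a direct reading-off of a free-product decomposition from the presentations established in \rethm{kernels}, with at most one auxiliary change of generators.
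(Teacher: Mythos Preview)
Your proposal is correct and follows the same approach as the paper, which simply states that the result ``follows from a routine computation using the presentations given in \rethm{kernels}'' without further detail; you have carried out exactly those routine Tietze-transformation computations explicitly. The only minor difference is one of order in the $(1,1,1)$ case: the paper's phrasing (``as a consequence'') suggests deducing $\Z^2\ast\Z$ from the identification with $FVP_3$ via \cite{BBD}, whereas you obtain $\Z^2\ast\Z$ directly and then match with $FVP_3$, but this is not a substantive distinction.
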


\begin{proof}
This follows from a direct computation using the presentations in Theorem~\ref{thm:kernels}. 
\end{proof}

\begin{que}
It was proved in Corollary~\ref{cor:kernels} that $VST_2$ is isomorphic to $VSPG_2$.
It is natural to ask whether, for general $n$, one has $VST_n \cong VSPG_n$.
\end{que}

\begin{cor}
Let $\epsilon_k\in \{0,1\}$, for $k=1,2,3$.
For any triple $(\epsilon_1,\, \epsilon_2,\, \epsilon_3)$, the kernel of $\phi_{(\epsilon_1,\, \epsilon_2,\, \epsilon_3)}\colon VSG_2\to S_2$ has trivial center. In particular, the same holds for $VSG_2$.
\end{cor}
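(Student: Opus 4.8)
The plan is to read the answer off the algebraic descriptions obtained in \recor{kernels}. In every one of the eight cases the kernel of $\phi_{\epsilon_1,\, \epsilon_2,\, \epsilon_3}\colon VSG_2\to S_2$ was identified with a free product of nontrivial groups: it is isomorphic to $\Z^2\ast\Z$ for the triples $(1,1,1)$, $(1,0,1)$ and $(0,1,1)$; to $F_2\ast\Z_2\ast\Z_2$ for $(1,1,0)$; to $\Z^2\ast\Z_2\ast\Z_2$ for $(1,0,0)$ and $(0,1,0)$; to $\Z^2\ast\Z^2$ for $(0,0,1)$; and to $\Z^2\ast\Z_2$ for $(0,0,0)$, this last group being $VSG_2$ itself. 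Grouping factors in the three-factor cases, each kernel has the form $G=A\ast B$ with $A\neq\{1\}$ and $B\neq\{1\}$.

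First I would invoke the classical description of centralizers in a nontrivial free product (see, e.g., \cite{MKS}, or argue via the action on the Bass--Serre tree of the splitting): if $G=A\ast B$ with both factors nontrivial, then the centralizer in $G$ of any element $g\neq 1$ is a \emph{proper} subgroup of $G$ --- in fact it is either conjugate into $A$, conjugate into $B$, or infinite cyclic. Since $G=A\ast B$ is strictly larger than each of its factors, no nontrivial element can have centralizer equal to all of $G$; equivalently, $Z(G)=\{1\}$.

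Applying this to each group in the list above shows that every $Ker(\phi_{\epsilon_1,\, \epsilon_2,\, \epsilon_3})$ has trivial center, and the special case $G=VSG_2\cong\Z^2\ast\Z_2$ yields the final assertion. I do not anticipate a genuine obstacle; the only minor point is that in the cases with three free factors one should first rewrite the group as a free product of exactly two nontrivial subgroups --- e.g.\ $F_2\ast\Z_2\ast\Z_2=F_2\ast(\Z_2\ast\Z_2)$ and $\Z^2\ast\Z_2\ast\Z_2=\Z^2\ast(\Z_2\ast\Z_2)$ --- before applying the centralizer theorem.
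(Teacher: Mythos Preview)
Your proposal is correct and follows exactly the same approach as the paper: the paper's proof is the single sentence ``The results follows immediately since free products of groups have trivial center,'' applied to the free-product descriptions in \recor{kernels}. You simply supply more detail (the explicit case list and the centralizer/Bass--Serre justification for why a nontrivial free product has trivial center), but the underlying argument is identical.
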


\begin{proof}
This follows immediately since free products of non-trivial groups have trivial center.
\end{proof}

\section{Some quotients of virtual singular braid groups}\label{sec:quotients}

In this section we introduce and study several quotients of the virtual singular braid group $VSG_n$.
The motivation for considering such quotients comes from the rich theory of quotients of the classical and virtual braid groups, which have played an important role in knot theory, low-dimensional topology, and related areas (see, for instance, \cite{B,BBD,D,Kau,KL}).

A fundamental feature of virtual braid groups is the presence of so-called \emph{forbidden relations}, which do not hold in general but whose imposition leads to important and well-studied quotient groups, such as the welded braid group and the unrestricted virtual braid group.
In the virtual singular setting, similar phenomena occur. We first show that certain relations are forbidden in $VSG_n$.
Then, motivated by the virtual case, we introduce natural quotients of $VSG_n$ obtained by imposing some of these relations and study their algebraic properties.

\subsection{Forbidden relations in $VSG_n$}

We begin by identifying relations that do not hold in the virtual singular braid group.
These relations are natural analogues of the forbidden relations appearing in the theory of virtual braid groups and play a central role in the definition of several quotients of $VSG_n$.

\begin{thm}\label{thm:forbidden}
Let $n \geq 3$.
The following relations do not hold in the virtual singular braid group $VSG_n$, for all \(1 \leq i \leq n-2\):
\begin{align}
\sigma_i \sigma_{i+1} v_i &\neq v_{i+1} \sigma_i \sigma_{i+1}, \label{eq:forbidden1}\\
v_i \sigma_{i+1} \sigma_i &\neq \sigma_{i+1} \sigma_i v_{i+1}, \label{eq:forbidden2}\\
\tau_i \sigma_{i+1} v_i &\neq v_{i+1} \sigma_i \tau_{i+1}, \label{eq:forbidden3}\\
v_i \sigma_{i+1} \tau_i &\neq \tau_{i+1} \sigma_i v_{i+1}. \label{eq:forbidden4}
\end{align}
\end{thm}

\begin{proof}
We prove the result by showing that each of the relations above is not preserved under a suitable homomorphism from $VSG_n$ to a symmetric group.

Consider the homomorphism
\[
\phi_{(1,1,1)}\colon VSG_n \longrightarrow S_n
\]
defined in Definition~\ref{defn:phimap}, which sends each generator $\sigma_i$, $\tau_i$ and $v_i$ to the transposition $(i\ i+1)$.

Applying $\phi_{(1,1,1)}$ to the left-hand side and right-hand side of \eqref{eq:forbidden1}, we obtain
\[
\phi_{(1,1,1)}(\sigma_i \sigma_{i+1} v_i)
= (i\ i+1)(i+1\ i+2)(i\ i+1),
\]
and
\[
\phi_{(1,1,1)}(v_{i+1} \sigma_i \sigma_{i+1})
= (i+1\ i+2)(i\ i+1)(i+1\ i+2).
\]
Since these permutations are distinct in $S_n$, relation~\eqref{eq:forbidden1} does not hold in $VSG_n$.
The same argument applies to \eqref{eq:forbidden2}, \eqref{eq:forbidden3}, and \eqref{eq:forbidden4}, showing that none of these relations is valid in $VSG_n$.
\end{proof}

\begin{rem}
Relations of the form \eqref{eq:forbidden1} and \eqref{eq:forbidden2} are the classical forbidden relations in the theory of virtual braid groups.
Relations \eqref{eq:forbidden3} and \eqref{eq:forbidden4} arise naturally in the virtual singular setting and involve interactions between classical, singular, and virtual generators.
Imposing some or all of these relations leads to meaningful quotients of $VSG_n$, which we study in the following subsections.
\end{rem}

\subsection{The welded singular braid group}

Motivated by the theory of virtual braid groups, an important quotient of the virtual singular braid group is obtained by imposing one of the forbidden relations involving classical and virtual crossings.
In the virtual case, this leads to the welded braid group, which admits a rich geometric interpretation in terms of motions of unknotted circles in $\mathbb{R}^3$ (see, for instance, \cite{D,Kau}).
We introduce here the analogous construction in the virtual singular setting.

\begin{defn}\label{defn:wsb}
The \emph{welded singular braid group} on $n$ strands, denoted by $WSG_n$, is the quotient of the virtual singular braid group $VSG_n$ by the normal closure of the relations
\begin{equation}\label{eq:welded}
\sigma_i \sigma_{i+1} v_i = v_{i+1} \sigma_i \sigma_{i+1},
\qquad \text{for } 1 \le i \le n-2.
\end{equation}
\end{defn}

Equivalently, the group $WSG_n$ is obtained from $VSG_n$ by imposing the first forbidden relation \eqref{eq:forbidden1} of Theorem~\ref{thm:forbidden}.
As shown in that theorem, relation~\eqref{eq:welded} does not hold in $VSG_n$, so $WSG_n$ is a proper quotient of the virtual singular braid group.

\begin{rem}
Relation~\eqref{eq:welded} is usually referred to as the \emph{over-forbidden relation}.
In the virtual braid group setting, imposing this relation yields the welded braid group.
The definition above can therefore be seen as a natural singular analogue of that construction.
\end{rem}

The welded singular braid group retains many structural features of $VSG_n$.
In particular, the natural projection onto the symmetric group remains well defined.

\begin{prop}\label{prop:wsb-projection}
The canonical homomorphism
\[
\phi_{(1,1,1)}\colon VSG_n \longrightarrow S_n
\]
induces a surjective homomorphism
\[
\overline{\phi}_{1,1,1}\colon WSG_n \longrightarrow S_n.
\]
\end{prop}

\begin{proof}
The relation~\eqref{eq:welded} is preserved under the homomorphism $\phi_{(1,1,1)}$, since both sides are mapped to the same permutation in $S_n$.
Therefore, $\phi_{(1,1,1)}$ factors through the quotient defining $WSG_n$, yielding a well-defined surjective homomorphism $\overline{\phi}_{1,1,1}$.
\end{proof}

\begin{rem}
As in the case of $VSG_n$, the kernel of $\overline{\phi}_{1,1,1}$ may be regarded as a \emph{welded singular pure braid group}.
A detailed study of this subgroup is beyond the scope of the present paper, but many of the techniques developed in Section~\ref{vsgn} apply verbatim in this setting.
\end{rem}

\subsection{The unrestricted virtual singular braid group}

Another natural quotient of the virtual singular braid group is obtained by imposing \emph{both} forbidden relations involving classical and virtual generators.
In the virtual braid group setting, this leads to the unrestricted virtual braid group, which has been studied extensively (see, for instance, \cite{B,BBD,Kau,KL}).
We introduce here the corresponding construction in the virtual singular context.

\begin{defn}\label{defn:uvsg}
The \emph{unrestricted virtual singular braid group} on $n$ strands, denoted by $UVSG_n$, is the quotient of the virtual singular braid group $VSG_n$ by the normal closure of the relations
\begin{align}
\sigma_i \sigma_{i+1} v_i &= v_{i+1} \sigma_i \sigma_{i+1}, \label{eq:unrestricted1}\\
v_i \sigma_{i+1} \sigma_i &= \sigma_{i+1} \sigma_i v_{i+1}, \label{eq:unrestricted2}
\end{align}
for all $1 \le i \le n-2$.
\end{defn}

Equivalently, the group $UVSG_n$ is obtained from $VSG_n$ by imposing both classical forbidden relations \eqref{eq:forbidden1} and \eqref{eq:forbidden2} of Theorem~\ref{thm:forbidden}.
Since neither of these relations hold in $VSG_n$, the group $UVSG_n$ is a proper quotient of the virtual singular braid group.

\begin{rem}
In the virtual braid group case, imposing relations \eqref{eq:unrestricted1} and \eqref{eq:unrestricted2} yields the unrestricted virtual braid group.
Thus, Definition~\ref{defn:uvsg} provides a direct singular analogue of that construction.
\end{rem}

As in the case of $VSG_n$ and of the welded singular braid group, the natural projection onto the symmetric group is still available.

\begin{prop}\label{prop:uvsg-projection}
The canonical homomorphism
\[
\phi_{(1,1,1)}\colon VSG_n \longrightarrow S_n
\]
induces a surjective homomorphism
\[
\overline{\phi}_{1,1,1}\colon UVSG_n \longrightarrow S_n.
\]
\end{prop}

\begin{proof}
Both relations \eqref{eq:unrestricted1} and \eqref{eq:unrestricted2} are preserved under $\phi_{(1,1,1)}$, since the images of their left-hand and right-hand sides coincide in $S_n$.
Therefore, $\phi_{(1,1,1)}$ factors through the quotient defining $UVSG_n$, yielding a well-defined surjective homomorphism $\overline{\phi}_{1,1,1}$.
\end{proof}

\begin{rem}
The kernel of $\overline{\phi}_{1,1,1}$ may be regarded as an \emph{unrestricted virtual singular pure braid group}.
Although we do not pursue a detailed study of this subgroup here, many of the methods developed in Section~\ref{vsgn} apply equally well in this setting.
\end{rem}

\subsection{Comparison with the virtual singular braid group}

We conclude this section by comparing the virtual singular braid group $VSG_n$ with two of its most natural quotients introduced above, namely the welded singular braid group $WSG_n$ and the unrestricted virtual singular braid group $UVSG_n$.
Our aim is to clarify which structural properties established for $VSG_n$ persist under these quotients and to describe the algebraic effects of imposing forbidden relations.

Recall that none of the relations listed in Theorem~\ref{thm:forbidden} hold in $VSG_n$.
Imposing the first family of forbidden relations yields the welded singular braid group $WSG_n$, while imposing both families yields the unrestricted virtual singular braid group $UVSG_n$.
In particular, for $n\ge 3$, we obtain proper quotients
\[
VSG_n \twoheadrightarrow WSG_n \twoheadrightarrow UVSG_n.
\]

\medskip

Despite the introduction of these additional relations, several fundamental properties of $VSG_n$ remain valid for both $WSG_n$
and $UVSG_n$.
In particular, the abelian invariants defined in Section~\ref{vsgn} survive in these quotients and give rise to short exact sequences analogous to those obtained for $VSG_n$.

\begin{prop}\label{prop:goldb-welded}
Let $n\ge 2$ and let $X_n$ be either $WSG_n$ or $UVSG_n$.
There exists a short exact sequence
\[
1 \longrightarrow \ang{B_n}^{X_n}
\longrightarrow X_n
\longrightarrow \Z \times \Z_2
\longrightarrow 1,
\]
where $\ang{B_n}^{X_n}$ denotes the normal closure of the classical braid group $B_n$ inside $X_n$.
\end{prop}

\begin{proof}
The proof follows the same lines as that of Theorem~\ref{thm:goldbsingular}.
Indeed, the homomorphisms defining the corresponding abelian invariants of $VSG_n$ factor through the quotients $WSG_n$ and $UVSG_n$, since the additional defining relations are preserved.
\end{proof}

\medskip

As in the virtual singular braid group, the canonical projection onto the symmetric group survives in both quotients.

\begin{prop}\label{prop:comparison-projection}
Let $n\ge 2$ and let $X_n$ be either $WSG_n$ or $UVSG_n$.
The canonical homomorphism
\[
\phi^{X_n}_{1,1,1}\colon X_n \longrightarrow S_n
\]
is surjective and induces a semi-direct product decomposition
\[
X_n=\ker{\phi^{X_n}_{1,1,1}} \rtimes S_n.
\]
\end{prop}

\begin{proof}
The defining relations of $WSG_n$ and $UVSG_n$ are compatible with the homomorphism $\phi_{(1,1,1)}$, and therefore the argument is identical to that of Proposition~\ref{prop:semi-direct}.
\end{proof}

On the other hand, the imposition of forbidden relations has a visible impact on the algebraic complexity of these groups.
In $WSG_n$ and $UVSG_n$, certain mixed relations involving classical, singular and virtual generators become interchangeable, leading to simplifications that do not occur in $VSG_n$.
This phenomenon mirrors what happens in the virtual braid group setting and supports the interpretation of $WSG_n$ and $UVSG_n$ as natural singular analogues of the welded and unrestricted virtual braid groups.

Finally, we describe homomorphisms from these quotients to symmetric groups, in direct analogy with the classification obtained for $VSG_n$ in Theorem~\ref{thm:BP}. 
Let $\nu_6$ denote the exceptional outer automorphism of the symmetric group $S_6$.

\begin{prop}\label{thm:BP-welded}
Let $n,m$ such that $n\ge 5$, $m\ge 2$ and $n\ge m$.
Let $X_n$ be either $WSG_n$ or $UVSG_n$, and let $\psi^{X_n}\colon X_n \to S_m$ be a group homomorphism.
Then, up to conjugation, one of the following possibilities holds:
\begin{enumerate}
	\item $\psi^{X_n}$ is abelian;
	\item $n=m$ and $\psi^{X_n}=\phi^{X_n}_{1,1,1}$;
	\item $n=m=6$ and $\psi^{X_n}=\nu_6\circ\phi^{X_n}_{1,1,1}$.
\end{enumerate}
\end{prop}

\begin{proof}

The proof follows closely the argument of \rethm{BP}.
Indeed, the additional defining relations of $WSG_n$ and $UVSG_n$ are preserved under the homomorphisms involved and do not interfere with the factorization arguments or with the classification of homomorphisms into symmetric groups.
\end{proof}

\end{document}